\newtheorem{theorem}{Theorem}
\newtheorem{proposition}{Proposition}
\newtheorem{lemma}{Lemma}
\theoremstyle{definition}
\newtheorem{remark}{Remark}
\newcommand{\EE}{\mathbb{E}}
\newcommand{\PP}{\mathbb{P}}
\newcommand{\z}{\mathbf{0}}
\newcommand{\el}{\mathbf{l}}
\newcommand{\x}{\mathbf{x}}
\newcommand{\y}{\mathbf{y}}
\newcommand{\n}{\mathbf{n}}
\newcommand{\ef}{\mathbf{f}}
\newcommand{\er}{\mathbf{r}}
\newcommand{\m}{\mathbf{m}}
\newcommand{\M}{\mathbf{M}}
\newcommand{\ES}{\mathbf{S}}
\newcommand{\ka}{\mathbf{k}}
\newcommand{\ba}{\mathbf{a}}
\newcommand{\bb}{\mathbf{b}}
\newcommand{\bc}{\mathbf{c}}
\newcommand{\q}{\mathbf{q}}
\newcommand{\bz}{\mathbf{z}}
\newcommand{\bi}{\mathbf{i}}
\newcommand{\bu}{\mathbf{u}}
\newcommand{\bv}{\mathbf{v}}
\newcommand{\X}{\mathbf{X}}
\newcommand{\N}{\mathbf{N}}
\newcommand{\Net}{\mathbb{N}^*}
\newcommand{\1}{\mathbf{1}}
\newcommand{\NN}{\mathbb{N}^{d}}
\newcommand{\RR}{ \mathbb{R}^{d}}
\newcommand{\ii}{i=1\ldots d}
\newcommand{\lf}{\left\lfloor}   
\newcommand{\rf}{\right\rfloor}
\begin{document}

\title{Beyond the $Q$-process: various ways of conditioning the multitype Galton-Watson process}
\author{Sophie P\'{e}nisson
  \thanks{\texttt{sophie.penisson@u-pec.fr}}}
\affil{Universit\'e Paris-Est, LAMA (UMR 8050), UPEMLV, UPEC, CNRS, 94010 Cr\'{e}teil, France}
\date{ }

\maketitle

\begin{abstract}
Conditioning a multitype Galton-Watson process to stay alive into the indefinite future leads to what is known as its associated $Q$-process. We show that the same holds true if the process is conditioned to reach a positive threshold or a non-absorbing state. We also demonstrate that the stationary measure of the $Q$-process, obtained by construction as two successive limits (first by delaying the extinction in the original process and next by considering the long-time behavior of the obtained $Q$-process), is as a matter of fact a double limit. Finally, we prove that conditioning a multitype branching process on having an infinite total progeny leads to a process presenting the features of a $Q$-process. It does not however coincide with the original associated $Q$-process, except in the critical regime.
\end{abstract}

{\bf Keywords:} multitype branching process, conditioned limit theorem, quasi-stationary distribution, $Q$-process, size-biased distribution, total progeny

{\bf 2010 MSC:} 60J80, 60F05

\section{Introduction}

The benchmark of our study  is the $Q$-process associated with a multitype Galton-Watson (GW) process, obtained by conditioning the branching process $\X_k$ on not being extinct in the distant future ($\{\X_{k+n}\neq\z\}$, with $n\to +\infty$) and on the event that extinction takes place ($\{\lim_l \X_l=\z\}$) (see \cite{Naka78}). Our goal is to investigate some seemingly comparable conditioning results and to relate them to the $Q$-process.

After a description of the basic assumptions on the multitype GW process,  we start in Subsection \ref{sec:asso} by describing the "associated" branching process, which will be a key tool when conditioning on the event that extinction takes place, or when conditioning on an infinite total progeny. 

We shall first prove  in Section \ref{sec:threshold} that by replacing in what precedes the conditioning event  $\{\X_{k+n}\neq\z\}$ by $\{\X_{k+n}\in S\}$, where $S$ is a subset which does not contain $\z$, the obtained limit process remains the $Q$-process.  This means in particular that conditioning  in the distant future on reaching a non-zero state or a positive threshold, instead of conditioning on non-extinction, does not alter the result. 

In a second instance, we focus in the noncritical case on the stationary measure of the positive recurrent $Q$-process. Formulated in a loose manner, this measure is obtained by considering $\{\X_k\mid \X_{k+n}\neq\z\}$, by  delaying the extinction time ($n\to\infty$), and by studying the long-time behavior of the limit process ($k\to\infty$). It is already known (\cite{Naka78}) that inverting the limits leads to the same result. We prove in Section \ref{sec:Yaglom double} that  the convergence to the stationary measure still holds even if $n$ and $k$ simultaneously grow to infinity. This requires an additional second-order moment assumption if the process is subcritical.

Finally, we investigate in Section \ref{sec:totalprog} the distribution of the multitype GW process conditioned on having an infinite total progeny. This is motivated by Kennedy's result, who studies in \cite{Ken75} the behavior of a monotype GW process $ X_k$ conditioned on the event $\{N = n\}$ as $n\to+\infty$, where $N=\sum_{k=0}^{+\infty}X_k$ denotes the total progeny. Note that the latter conditioning seems comparable to the device of conditioning  on the event that extinction occurs but has not done so by generation $n$. It is indeed proven in the aforementioned paper that in the critical case, conditioning on the total progeny or on non-extinction indifferently results in the $Q$-process. This result has since then been extended for instance to monotype GW trees and to other conditionings: in the critical case, conditioning a GW tree by its height, by its total progeny  or by its number of leaves leads to the same limiting tree (see e.g. \cite{AbrDel14,Jan12}). However, in the noncritical case, the two methods provide different limiting results: the limit process is always the $Q$-process of some critical process, no matter the class of criticality of the original process. Under a moment assumption (depending on the number of types of the process), we generalize this result  to the multitype case. For this purpose we assume that the total progeny increases to infinity according to the "typical" limiting type proportions of the associated critical GW process, by conditioning on the event $\{\N = \lf n\mathbf{w}\rf\}$ as $n\to\infty$, where $\mathbf{w}$ is a left eigenvector related to the maximal eigenvalue 1 of the mean  matrix of the critical process.

\subsection{Notation}
\label{sec:notation}
Let $d\geqslant 1$. In this paper, a generic point in $\RR$ is denoted by $\x=(x_1,\ldots,x_d)$, and its transpose is written $\x^T$. By $\mathbf{e}_i=(\delta_{i,j}) _{1\leqslant j\leqslant d}$ we denote the $i$-th unit vector in $\RR$, where $\delta_{i,j}$ stands for the Kronecker
delta. We write $\z=\left( 0,\ldots,0\right) $ and $\1=\left( 1,\ldots,1\right) $. The notation $\x\y$ (resp.  $\lf \x \rf$) stands for the vector with coordinates $x_iy_i$  (resp. $\lf x_i \rf$,  the integer part of $x_i$). We denote by $\x^{\y}$ the product $\prod_{i=1}^dx_i^{y_i}$. The obvious partial order on $\RR$ is  $\x\leqslant \y$, when  $x_i\leqslant y_i$ for each $i$, and $\x<\y$  when  $x_i< y_i$ for each $i$. Finally,  $\x \cdot \y$ denotes the scalar product in $\RR$, $\|\x\|_1$ the $L^1$-norm and $\|\x\|_2$ the $L^2$-norm.
 
\subsection{Multitype GW processes}
 Let $( \X_k)_{k\geqslant 0}$ denote a $d$-type GW process, with $n$-th transition probabilities $P_n\left( \x,\y\right)= \PP( \X_{k+n}=\y\mid\X_{k}=\x)$, $ k$, $n\in\mathbb{N}$, $\x$, $\y\in\NN$. Let $\ef=\left( f_1,\ldots,f_d\right) $ be its offspring generating function, where for each $i=1\ldots d$ and $\er\in[0,1]^d$, $f_i\left( \er\right) =\EE_{\mathbf{e}_i}(\er^{\X_1})=\sum_{\ka\in\NN}p_i\left( \ka\right) \er^{\ka}$, the subscript $\mathbf{e}_i$ denoting the initial condition, and $p_i$ the offspring probability distribution of type $i$. For each $i$, we denote by $\m^i=\left( m_{i1},\ldots,m_{id}\right) $ (resp. $\mathbf{\Sigma}^{i}$) the mean vector (resp. covariance matrix) of the offspring probability distribution $p_i$. The mean matrix is then given by $\mathbf{M}=(m_{ij})_{1\leqslant i,j \leqslant d}$. If it exists, we denote by $\rho$ its Perron's root, and by $\bu$ and $\bv$ the associated right and left eigenvectors (i.e. such that $\mathbf{M}\bu^T=\rho\bu^T$, $\bv\mathbf{M}=\rho\bv$), with the normalization convention $\bu\cdot \1=\bu\cdot\bv=1$. The process is then called critical (resp. subcritical, supercritical) if $\rho=1$ (resp. $\rho<1$, $\rho>1$). In what follows we shall denote by $\ef_n$ the  $n$-th iterate of the function $\ef$, and by $\mathbf{M}^n=(m_{ij}^{(n)})_{1\leqslant i,j \leqslant d}$ the $n$-th power of the matrix $\M$, which correspond respectively to the generating function and mean matrix of the process at time $n$. By the branching property, for each $\x\in\NN$, the function $\ef_n^\x$ then corresponds to the generating function of the process at time $n$ with initial state $\x$, namely $\EE_{\x}(\er^{\X_n})=\ef_n\left( \er\right)^\x$. Finally, we define the extinction time  $T=\inf\{k\in\mathbb{N},\,\X_k=\z\}$, and the extinction probability  vector  $\q=(q_1,\ldots,q_d)$, given by $q_i=\PP_{\mathbf{e}_i}\left( T<+\infty\right)$, $i=1\ldots d$.

\subsection{Basic assumptions}
\label{sec:basic assumptions}
\begin{enumerate}
\item[$(A_1)$] The mean matrix $\mathbf{M}$ is finite. The process is nonsingular ($\ef(\er)\neq \mathbf{M}\er$), is positive regular (there exists some $n\in\Net$ such that each entry of $\mathbf{M}^n$ is positive), and is such that $\mathbf{q}>\z$.
\end{enumerate}
The latter statement will always be assumed. It ensures in particular the existence of the Perron's root $\rho$ and that (\cite{Karl66}),
\begin{equation}\label{mean}
\lim_{n\to+\infty}\rho^{-n}m^{(n)}_{ij}=u_iv_j.
\end{equation} When necessary, the following additional assumptions will be made.
\begin{enumerate}
\item[$(A_2)$]  For each $i,j=1\ldots d$, $\mathbb{E}_{\mathbf{e}_i}( X_{1,j}\ln X_{1,j})<+\infty$.
\item[$(A_3)$]  The covariance matrices $\mathbf{\Sigma}^i$, $i=1\ldots d$, are finite.
\end{enumerate}

\subsection{The associated process}
\label{sec:asso}
For any vector $\ba>\z$ such that for each $i=1\ldots d$, $f_i(\ba)<+\infty$, we define the generating function $\overline{\ef}=\left( \overline{f}_1,\ldots,\overline{f}_d\right) $ on $[0,1]^d$ as follows: \[\overline{f}_i\left( \er\right)=\frac{f_i\left( \ba \er\right)}{f_i\left( \ba\right)},\ \ i=1\ldots d.\]
We then denote by $\big(\overline{\X}_k\big)_{k\geqslant 0}$ the GW process with offspring generating function $\overline{\ef}$, which will be referred to as the \textit{associated process} with respect to $\ba$. We shall denote by $\overline{P}_n$, $\overline{p}_i$ etc. its transition probabilities, offspring probability distributions etc. We easily compute that for each $n\geqslant 1$, $i=1\ldots d$, $\ka\in\NN$ and $\er\in[0,1]^d$, denoting by $*$ the convolution product,
\begin{align}\label{off}
\overline{p}_i^{*n}\left(\ka\right)=\frac{\ba^{\ka}}{f_i\left( \ba\right)^{n} }p_i^{*n}\left(\ka\right),\ \ \  
\overline{f}_{n,i}\left(\er\right)=\frac{f_{n,i}\left(\ba\er\right)}{f_i\left( \ba\right)^{n} }.
\end{align}
\begin{remark}\label{rem: subcritic} It is known (\cite{JagLag08}) that a supercritical GW process conditioned on the event $\{T<+\infty\}$ is subcritical. By construction, its  offspring generating function is given by $\er\mapsto f_i(\mathbf{q}\er)/q_i$. Since the extinction probability vector satisfies $\ef(\mathbf{q})=\mathbf{q}$ (\cite{Har63}), this means that the associated process $\big(\overline{\X}_k\big)_{k\geqslant 0}$ with respect to $\mathbf{q}$ is subcritical.
\end{remark}

\section{Classical results: conditioning on non-extinction}
\label{sec:nonext}

\subsection{The Yaglom distribution (\cite{JofSpit67}, Theorem 3)}
\label{sec:Yaglom}
Let $\left(\X_k\right)_{k\geqslant 0} $ be a subcritical multitype GW process satisfying $(A_1)$. Then for all $\x_0,\bz\in\NN\setminus\{\z\}$, 
\begin{equation}\label{Yaglom}
\lim_{k\to+\infty}\PP_{\x_0}\left( \X_{k}=\bz \mid \X_k\neq \z\right)=\nu(\bz),
\end{equation}
where $\nu$ is a probability distribution on $\NN\setminus\{\z\}$ independent of the initial state $\x_0$. This quasi-stationary distribution is often referred to as the Yaglom distribution associated with  $\left(\X_k\right)_{k\geqslant 0}$. We shall denote by $g$ its generating function $g(\er)=\sum_{\bz\neq \z}\nu(\bz)\er^{\bz}$. Under $(A_2)$, $\nu$ admits finite and positive first moments
\begin{equation}\label{first moment g}
\frac{\partial g \left( \mathbf{1}\right) }{\partial r_i}=v_i\gamma^{-1},\ \ \ii,
\end{equation}
where $\gamma>0$ is a limiting quantity satisfying for each $\x\in\NN\setminus\{\z\}$,
\begin{equation}\label{againbasic}
\lim_{k\to +\infty}\rho^{-k}\PP_{\x}\left( \X_k\neq\z\right) = \gamma\,\x\cdot\bu.
\end{equation}

\subsection{The $Q$-process (\cite{Naka78}, Theorem 2)}
\label{sec:Q process}
Let $\left(\X_k\right)_{k\geqslant 0} $ be a multitype GW process satisfying $(A_1)$. Then for all $\x_0\in\NN\setminus\{\z\}$, $k_1\leqslant\ldots\leqslant k_j\in\mathbb{N}$,  and $\x_1,\ldots,\x_j\in\NN$,
\begin{multline}\label{limext}
\lim_{n\to+\infty}\PP_{\x_0}\left( \X_{k_1}=\x_1,\ldots, \X_{k_j}=\x_j\mid \X_{k_j+n}\neq\z,\,T<+\infty\right)\\=\frac{1}{\overline{\rho}^{k_j}}\frac{\x_j\cdot\overline{\bu}}{\x_0\cdot\overline{\bu}}\PP_{\x_0}\left( \overline{\X}_{k_1}=\x_1,\ldots, \overline{\X}_{k_j}=\x_j\right),
\end{multline}
where $\big(\overline{\X}_k\big)_{k\geqslant 0}$ is  the associated process with respect to $\q$. As told in the introduction, this limiting process is the $Q$-process associated with $\left(\X_k\right)_{k\geqslant 0}$. It is Markovian with transition probabilities
\begin{equation*}
Q_1\left( \x,\y\right)=\frac{1}{\overline{\rho}}\frac{\y\cdot\overline{\bu}}{\x\cdot\overline{\bu}}\overline{P}_1\left( \x,\y\right)=\frac{1}{\overline{\rho}}\q^{\y-\x}\frac{\y\cdot\overline{\bu}}{\x\cdot\overline{\bu}}P_1\left( \x,\y\right),\ \ \ \ \ \ \x,\y\in\NN\setminus\{\z\}.
\end{equation*}
If $\rho>1$, the $Q$-process is positive recurrent. If  $\rho=1$, it is transient. If   $\rho<1$, the $Q$-process is positive recurrent if and only if $(A_2)$ is satisfied. In the positive recurrent case, the stationary measure for the $Q$-process is given by the size-biased Yaglom distribution
\begin{equation}\label{size biased}
\overline{\mu}\left( \bz\right)=\frac{\bz\cdot\bu\,\overline{\nu}\left( \bz\right)}{\sum_{\y\in\NN\setminus\{\z\}}\y\cdot\bu\,\overline{\nu}\left( \y\right)},\ \ \bz\in\NN\setminus\{\z\},
\end{equation}
where $\overline{\nu}$ is the Yaglom distribution associated with the subcritical process $\big(\overline{\X}_k\big)_{k\geqslant 0}$.

\subsection{A Yaglom-type distribution  (\cite{Naka78}, Theorem 3)}
\label{sec:Yaglom type}

Let $\left(\X_k\right)_{k\geqslant 0} $ be a noncritical multitype GW process satisfying $(A_1)$. Then for all $\x_0,\bz\in\NN\setminus\{\z\}$ and $n\in\N$,
$
\lim_{k}\PP_{\x_0}\left( \X_{k}=\bz \mid \X_{k+n}\neq\z,\,T<+\infty\right)=\overline{\nu}^{(n)}(\bz)$,  
where $\overline{\nu}^{(n)}$ is a probability distribution on $\NN\setminus\{\z\}$ independent of the initial state $\x_0$. In particular, $\overline{\nu}^{(0)}=\overline{\nu}$ is the Yaglom distribution associated with $\big(\overline{\X}_k\big)_{k\geqslant 0}$, the associated subcritical process with respect to $\q$. Moreover, assuming in addition $(A_2)$ if $\rho<1$, then for each $\bz\in\NN\setminus\{\z\}$, $
\lim_{n}\overline{\nu}^{(n)}(\bz)=\overline{\mu}\left( \bz\right).$

\section{Conditioning on reaching a certain state or threshold}
\label{sec:threshold}
In this section we shall generalize \eqref{limext} by proving that by replacing the conditioning event  $\{\X_{k_j+n}\neq \z\}$ by $\{\X_{k_j+n}\in S\}$, where $S$ is a subset of $\NN\setminus\{\z\}$, the obtained limit process remains the $Q$-process. In particular, conditioning the process on reaching a certain non-zero state or positive threshold in a distant future, i.e. with \begin{equation*}S=\{\y\},\ S=\{\x\in\NN,\ \|\x\|_1= m\}\ \mbox{or}\ S=\{\x\in\NN,\ \|\x\|_1\geqslant m\},\end{equation*} ($\y\neq\z, m>0$), leads to the same result as conditioning the process on non-extinction.

In what follows we call a subset $S$ accessible if for any $\x\in\NN\setminus\{\z\}$, there exists some $n\in\mathbb{N}$ such that $\PP_{\x}\left( \X_n\in S\right) >0$. For any subset $S$ we shall denote $S^c=\NN\setminus\left(\{\z\}\cup S\right)$. 
\begin{theorem}Let $\left(\X_k\right)_{k\geqslant 0} $ be a multitype GW process satisfying $(A_1)$, and let $S$ be a subset of $\NN\setminus\{\z\}$. If $\rho\leqslant 1$ we assume in addition one of the following assumptions:
\begin{itemize}
\item[$(a_1)$] $S$ is finite and accessible,
\item[$(a_2)$] $S^c$ is finite,
\item[$(a_3)$] $\left(\X_k\right)_{k\geqslant 0} $ is subcritical and satisfies $(A_2)$.
\end{itemize}
Then for all $\x_0\in\NN\setminus\{\z\}$, $k_1\leqslant\ldots\leqslant k_j\in\mathbb{N}^*$ and  $\x_1,\ldots,\x_j\in\NN$,
\begin{multline}\label{result}
\lim_{n\to +\infty}\PP_{\x_0}\left( \X_{k_1}=\x_1,\ldots, \X_{k_j}=\x_j\mid\X_{k_j+n}\in S,\ T<+\infty\right)
\\=\frac{1}{\overline{\rho}^{k_j}}\frac{\x_j\cdot\overline{\bu}}{\x_0\cdot\overline{\bu}}\PP_{\x_0}\left( \overline{\X}_{k_1}=\x_1,\ldots,
 \overline{\X}_{k_j}=\x_j\right),
\end{multline}
where $\big(\overline{\X}_k\big)_{k\geqslant 0}$ is  the associated process with respect to $\q$. 
\end{theorem}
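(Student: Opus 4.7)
The plan is to reduce the left-hand side of \eqref{result} to an expression involving only the associated process $\overline{\X}$, and then to identify the limit of a ratio of ``survival-in-$S$'' probabilities via Yaglom-type asymptotics. Because $\ef(\q)=\q$, a direct comparison of generating functions yields the identity $P_n(\x,\y)\q^{\y-\x}=\overline{P}_n(\x,\y)$ for all $n,\x,\y$. Combining it with the Markov property and the formula $\PP_{\y}(T<+\infty)=\q^{\y}$, the numerator of the conditional probability telescopes to
\[\q^{\x_0}\,\PP_{\x_0}\!\left(\overline{\X}_{k_1}=\x_1,\ldots,\overline{\X}_{k_j}=\x_j\right)\PP_{\x_j}\!\left(\overline{\X}_n\in S\right),\]
while the denominator becomes $\q^{\x_0}\,\PP_{\x_0}(\overline{\X}_{k_j+n}\in S)$. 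The factors $\q^{\x_0}$ cancel and the first term is already the targeted right-hand side of \eqref{result}, so the whole theorem reduces to
\[\lim_{n\to+\infty}\frac{\PP_{\x_j}(\overline{\X}_n\in S)}{\PP_{\x_0}(\overline{\X}_{k_j+n}\in S)}=\frac{1}{\overline{\rho}^{k_j}}\,\frac{\x_j\cdot\overline{\bu}}{\x_0\cdot\overline{\bu}}.\]

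Since $S\subset\NN\setminus\{\z\}$, each event $\{\overline{\X}_n\in S\}$ is contained in $\{\overline{\X}_n\neq\z\}$, so I factor $\PP_{\x}(\overline{\X}_n\in S)=\PP_{\x}(\overline{\X}_n\neq\z)\,\PP_{\x}(\overline{\X}_n\in S\mid\overline{\X}_n\neq\z)$. The associated process coincides with $\X$ when $\rho\leqslant 1$ and is a distinct subcritical process with $\overline{\rho}<1$ when $\rho>1$ (Remark \ref{rem: subcritic}); in every case \eqref{againbasic} applied to $\overline{\X}$ yields the ratio of survival probabilities, with limit $\overline{\rho}^{-k_j}(\x_j\cdot\overline{\bu})/(\x_0\cdot\overline{\bu})$. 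In the supercritical case $\overline{\gamma}>0$ is automatic since the damping factor $\q^{\ka}$ with $\q<\1$ gives the analogue of $(A_2)$ for $\overline{\X}$; in the subcritical case assumption $(a_3)$ supplies $(A_2)$ directly; the critical case $\rho=1$ is handled by the standard asymptotic $\PP_{\x}(\X_n\neq\z)\sim C(\x\cdot\bu)/n$. For the remaining conditional factor I show $\PP_{\x}(\overline{\X}_n\in S\mid\overline{\X}_n\neq\z)\to\overline{\nu}(S)>0$, so the two such factors in the ratio cancel in the limit. Under $(a_1)$, this is a finite sum of Yaglom convergences $\PP_{\x}(\overline{\X}_n=\y\mid\overline{\X}_n\neq\z)\to\overline{\nu}(\y)$ over $\y\in S$, with accessibility ensuring $\overline{\nu}(S)>0$; under $(a_2)$ the same argument is applied to the finite complement $S^c$; under $(a_3)$, or when $\rho>1$ (so that $\overline{\X}$ is automatically subcritical with finite $X\log X$ moments), Yaglom is extended from singletons to the arbitrary set $S$ by a dominated convergence argument based on the finite first moments of $\overline{\nu}$ from \eqref{first moment g}.

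The principal obstacle is precisely this extension of Yaglom convergence from singletons to a general set $S$: the sum $\overline{\nu}(S)=\sum_{\y\in S}\overline{\nu}(\y)$ has potentially infinitely many terms, and exchanging limit and sum must be justified by a tail bound uniform in $n$. This explains why $(a_3)$ must include the second-order condition $(A_2)$, which produces the finite first Yaglom moments \eqref{first moment g} and thereby the uniform integrability needed; the alternative hypotheses $(a_1)$ and $(a_2)$ sidestep the difficulty by making the relevant sum finite. A secondary point of care is the critical regime, where \eqref{againbasic} degenerates ($\overline{\gamma}=0$) and both the survival probability and its local analog decay only polynomially, so one must substitute the appropriate critical asymptotics for the discrete Yaglom objects, but the overall structure of the argument is unchanged.
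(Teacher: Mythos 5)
Your reduction to the ratio
\(\displaystyle\lim_{n}\PP_{\x_j}(\overline{\X}_n\in S)/\PP_{\x_0}(\overline{\X}_{k_j+n}\in S)\)
is correct and matches the paper's first step. The identity $\overline P_n(\x,\y)=\q^{\y-\x}P_n(\x,\y)$ holds precisely because $\ef(\q)=\q$, and the telescoping through the Markov property is sound. After that reduction, however, your argument takes a different route from the paper and it has a genuine gap in the critical regime.

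Your key step factorizes $\PP_\x(\overline{\X}_n\in S)=\PP_\x(\overline{\X}_n\neq\z)\,\PP_\x(\overline{\X}_n\in S\mid\overline{\X}_n\neq\z)$ and then claims each conditional factor converges to $\overline{\nu}(S)>0$, so the two cancel. That works when $\overline{\rho}<1$, i.e.\ in cases $(a_3)$ and the supercritical reduction. But when $\rho=1$ (so $\q=\1$, $\overline{\X}=\X$, $\overline{\rho}=1$) and $S$ is finite as in $(a_1)$, the Yaglom distribution in the sense of \eqref{Yaglom} does not exist: for a critical process conditioned on survival, every point mass $\PP_\x(\X_n=\y\mid\X_n\neq\z)\to 0$. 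Concretely, from \eqref{Nak4} one has $P_n(\x,\y)=(\pi(\y)+o(1))(\ef_{n+1}(\z)^\x-\ef_n(\z)^\x)$, and by \eqref{Nak3} this increment is $o\bigl(1-\ef_n(\z)^\x\bigr)$ when $\rho=1$, so $\PP_\x(\X_n\in S\mid\X_n\neq\z)\to 0$ for finite $S$. Your ratio of conditional probabilities is then a genuine $0/0$, and nothing in the argument establishes that it tends to $1$; and the claim ``accessibility ensures $\overline{\nu}(S)>0$'' is vacuous there. Your footnote that in the critical regime one must ``substitute the appropriate critical asymptotics... but the overall structure is unchanged'' is exactly where the proof breaks: the structure \emph{does} change, because the correct normalization in the critical case is the local one $\ef_{n+1}(\z)^\x-\ef_n(\z)^\x$ with Nakagawa's measure $\pi$ (which is $\sigma$-finite but infinite when $\rho=1$), not the global survival probability with a Yaglom law. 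The paper's proof uses \eqref{Nak1}--\eqref{Nak4} directly with that local normalization, thereby handling $\rho=1$ and $\rho<1$ in a unified way under $(a_1)$; your approach needs to be rewritten along the same lines for the critical case. (For $(a_2)$ your factorization happens to survive the critical case, since $\PP_\x(\X_n\in S^c\mid\X_n\neq\z)\to 0$ forces both conditional factors to $1$; and $(a_3)$ presupposes subcriticality, so only $(a_1)$ with $\rho=1$ is affected.)

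Two smaller points. First, when you assert $\overline{\nu}(S)>0$ under $(a_3)$ you should say why: it follows from the fact that $\nu$ (equivalently $\pi=(1-\rho)^{-1}\nu$) charges every state, which is Remark~2 of \cite{Naka78}, and you should cite it. Second, for $(a_3)$ the exchange of limit and infinite sum over $S$ has to be made precise; the paper does this with a Markov-inequality truncation using the finite first moments \eqref{first moment g} together with the second-moment growth bound from \eqref{har}, and your ``dominated convergence based on finite first Yaglom moments'' needs to be turned into a bound of this type to be rigorous.
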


\begin{proof}Note that if $\rho>1$, then $\q<\1$ (\cite{AthNey}) which implies that $\mathbb{E}_{\mathbf{e}_i}( \overline{X}_{1,j}\ln \overline{X}_{1,j}) <+\infty$, meaning that $\big(\overline{\X}_k\big)_{k\geqslant 0} $ automatically satisfies $(A_2)$. Thanks to Remark \ref{rem: subcritic}, we can thus assume without loss of generality that $\rho \leqslant 1$ and simply consider the limit \begin{multline}\label{reaching}\lim_{n\to +\infty}\PP_{\x_0}\left( \X_{k_1}=\x_1,\ldots, \X_{k_j}=\x_j\mid\X_{k_j+n}\in S\right)\\=\lim_{n\to +\infty}\PP_{\x_0}\left( \X_{k_1}=\x_1,\ldots, \X_{k_j}=\x_j\right) \frac{\PP_{\x_j}\left( \X_n\in S\right)}{\PP_{\x_0}\big( \X_{k_j+n}\in S\big)}.\end{multline}

Let us recall here some of the technical results established for $\rho \leqslant 1$ in \cite{Naka78}, essential to our proof. First, for each $\z\leqslant \bb<\bc\leqslant \mathbf{1}$ and $\x\in\NN$,
\begin{equation}\label{Nak1}
\lim_{n\to +\infty}\frac{\bv\cdot\left(  \ef_{n+2}\left( \z\right)-\ef_{n+1}\left( \z\right) \right) }{\bv\cdot\left(  \ef_{n+1}\left( \z\right)-\ef_{n}\left( \z\right) \right) }=\rho,
\end{equation}
\begin{equation}\label{Nak2}
\lim_{n\to +\infty}\frac{  \ef_{n}\left( \bc\right)^{\x}-\ef_{n}\left( \bb\right)^{\x} }{\bv\cdot\left(  \ef_{n}\left( \bc\right)-\ef_{n}\left( \bb\right) \right) }=\x\cdot\bu,
\end{equation}
Moreover, for each $\x,\y\in\NN\setminus\{\z\}$,
\begin{equation}\label{Nak3}
\lim_{n\to +\infty}\frac{1-\ef_{n+1}\left( \z\right) ^{\x}}{1-\ef_{n}\left( \z\right) ^{\x}}=\rho,
\end{equation}
\begin{equation}\label{Nak4}
P_n\left( \x,\y\right) =\left( \pi\left( \y\right) +\varepsilon_n\left(  \x,\y\right)\right) \left( \ef_{n+1}\left( \z\right) ^{\x}-\ef_{n}\left( \z\right) ^{\x}\right),
\end{equation}where $\lim_{n}\varepsilon_n\left(  \x,\y\right)=0$ and $\pi$ is the unique measure (up to multiplicative constants) on $\NN\setminus\{\z\}$ not identically zero satisfying $\sum_{\y\neq \z}\pi( \y) P( \y,\bz)=\rho\pi( \bz)$ for each  $\bz\neq \z$. In particular, if $\rho<1$, $\pi=\left( 1-\rho\right)^{-1}\nu$, where $\nu$ is the probability distribution defined by \eqref{Yaglom}.

Let us first assume $(a_1)$. By \eqref{Nak4}
\begin{align}\label{toworkon1}
\frac{\PP_{\x_j}\left( \X_n\in S\right)}{\PP_{\x_0}\big( \X_{k_j+n}\in S\big)}&=\frac{\sum_{\bz \in S}P_n\left( \x_j,\bz\right) }{\sum_{\bz \in S}P_{n+k_j}\left( \x_0,\bz\right)}\nonumber\\&=\frac{\pi\left( S\right) +\varepsilon_n\left( \x_j\right) }{\pi\left( S\right) +\varepsilon_{n+k_j}\left( \x_0\right)}\frac{ \ef_{n+1}\left( \z\right) ^{\x_j}-\ef_{n}\left( \z\right) ^{\x_j}}{\ef_{n+k_j+1}\left( \z\right) ^{\x_0}-\ef_{n+k_j}\left( \z\right) ^{\x_0}},
\end{align}
where $\lim_{n}\varepsilon_n\left( \x\right)=\lim_{n}\sum_{\bz\in S}\varepsilon_n\left(  \x,\bz\right)=0$ since $S$ is finite. On the one hand, we can deduce from \eqref{Nak1} and \eqref{Nak2} that
\begin{equation*}
\lim_{n\to +\infty}\frac{ \ef_{n+1}\left( \z\right) ^{\x_j}-\ef_{n}\left( \z\right) ^{\x_j}}{\ef_{n+k_j+1}\left( \z\right) ^{\x_0}-\ef_{n+k_j}\left( \z\right) ^{\x_0}}
=\frac{1}{\rho^{k_j}}\frac{\x_j\cdot\bu}{\x_0\cdot\bu}.
\end{equation*}
On the other hand, $\pi$ being not identically zero, there exists some $\y_0\in\NN\setminus\{\z\}$ such that $\pi\left( \y_0\right) >0$. Since $S$ is accessible, there exists some $\bz_0\in S$ and $k\in\mathbb{N}^*$ such that $P_k\left( \y_0,\bz_0\right)>0$, and thus
\[+\infty>\pi\left( S\right) \geqslant\pi\left(  \bz_0\right) =\rho^{-k}\sum_{\y\in\NN\setminus\{\z\}}\pi\left(  \y\right)  P_k\left( \y,\bz_0\right)\geqslant \rho^{-k}\pi\left(  \y_0\right) P_k\left( \y_0,\bz_0\right)>0.\]
From \eqref{toworkon1} we thus deduce that \eqref{reaching} leads to \eqref{result}.

Let us now assume $(a_2)$. We can similarly deduce from \eqref{Nak4} that
\begin{multline}\label{toworkon}
\frac{\PP_{\x_j}\left( \X_n\in S\right)}{\PP_{\x_0}( \X_{k_j+n}\in S)}\\=\frac{1-\ef_{n}\left( \z\right) ^{\x_j}- \left( \pi\left( S^c\right) +\varepsilon_n\left( \x_j\right)\right) \left( \ef_{n+1}\left( \z\right) ^{\x_j}-\ef_{n}\left( \z\right) ^{\x_j}\right) }{1-\ef_{n+k_j}\left( \z\right) ^{\x_0}-( \pi\left( S^c\right) +\varepsilon_{n+k_j}\left( \x_0\right)) (\ef_{n+k_j+1}\left( \z\right) ^{\x_0}-\ef_{n+k_j}\left( \z\right) ^{\x_0}) },
\end{multline}
with $0\leqslant \pi\left( S^c\right) <+\infty$ and $\lim_{n}\varepsilon_n\left( \x\right)=\lim_{n}\sum_{\bz\in S^c}\varepsilon_n\left(  \x,\bz\right)=0$ since $S^c$ is finite. Note that \eqref{limext} implies that 
\begin{equation*}
\ \lim_{n\to +\infty}\frac{1-\ef_{n}\left( \z\right) ^{\x_j}}{1-\ef_{n+k_j}\left( \z\right) ^{\x_0}}=\frac{1}{\rho^{k_j}}\frac{\x_j\cdot\bu}{\x_0\cdot\bu},
\end{equation*} which together with \eqref{Nak3} enables to show that \eqref{toworkon} tends to $\rho^{-k_j}\frac{\x_j\cdot\bu}{\x_0\cdot\bu}$ as $n$ tends to infinity, leading again to \eqref{result}.

Let us finally assume $(a_3)$. Then we know from \cite{Naka78} (Remark 2) that $\pi(\bz)>0$ for each $\bz\neq \z$, hence automatically $0<\pi\left( S\right) =\left( 1-\rho\right)^{-1}\nu\left( S\right)<+\infty$. Moreover, $\nu$ admits finite first-order moments (see \eqref{first moment g}). Hence for any $a>0$, by Markov's inequality,
\begin{align*}
\left| \sum_{\bz\in S}\varepsilon_n\left( \x,\bz\right)\right|&\leqslant \sum_{\substack{\bz\in S\\\|\bz\|_1<a}}\left|\varepsilon_n\left(  \x,\bz\right)\right|+\sum_{\substack{\bz\in S\\\|\bz\|_1 \geqslant a}}\left|\frac{P_n\left( \x,\bz\right)}{\ef_{n+1}\left( \z\right) ^{\x}-\ef_{n}\left( \z\right) ^{\x}}- \pi\left( \bz\right)\right|\\&\leqslant \sum_{\substack{\bz\in S\\\|\bz\|_1<a}}\left|\varepsilon_n\left(  \x,\bz\right)\right|+\frac{1}{a}\frac{\EE_{\x}\left(\|\X_n\|_1\right)}{\ef_{n+1}\left( \z\right) ^{\x}-\ef_{n}\left( \z\right) ^{\x}}+\frac{1}{1-\rho}\frac{1}{a}
\sum_{i=1}^d\frac{\partial g \left(  \mathbf{1}\right) }{\partial r_i}.
\end{align*}
We recall that by \eqref{againbasic}, $\lim_n \rho^{-n}\left( \ef_{n+1}\left( \z\right) ^{\x}-\ef_{n}\left( \z\right) ^{\x}\right) =\left( 1-\rho\right) \gamma\,\x\cdot\bu$, while  by \eqref{mean}, $\lim_n \rho^{-n}\EE_{\x}\left(\|\X_n\|_1\right)=\sum_{i,j=1}^dx_iu_iv_j$. Hence the previous inequality ensures that $\lim_n \sum_{\bz\in S}\varepsilon_n\left( \x,\bz\right)=0$. We can thus write \eqref{toworkon1} even without the finiteness assumption of $S$, and prove \eqref{result} as previously. 
\end{proof}

\section{The size-biased Yaglom distribution as a double limit}
\label{sec:Yaglom double}

From Subsection \ref{sec:Q process} and Subsection \ref{sec:Yaglom type} we know that in the noncritical case, assuming $(A_2)$ if $\rho<1$,
\begin{align*}
\lim_{k\to +\infty}\lim_{n\to +\infty}\PP_{\x_0}\left(  \X_k=\bz\mid\X_{k+n}\neq \z,\,T<+\infty \right) &=\lim_{k\to +\infty}Q_k\left(\x_0, \bz\right) =\overline{\mu}\left( \bz\right),\\
   \lim_{n\to +\infty}\lim_{k\to +\infty}\PP_{\x_0}\left(  \X_k=\bz\mid\X_{k+n}\neq \z,\,T<+\infty \right) &=\lim_{n\to +\infty}\overline{\nu}^{(n)}\left(\bz\right) =\overline{\mu}\left( \bz\right).
\end{align*} We prove here that, under the stronger assumption $(A_3)$ if $\rho<1$, this limiting result also holds when $k$ and $n$ simultaneously tend to infinity. 

\begin{theorem}Let $\left(\X_k\right)_{k\geqslant 0} $ be a noncritical multitype GW process satisfying $(A_1)$. If $\rho<1$, we assume in addition $(A_3)$. Then  for all $\x_0\in\NN\setminus\{\z\}$ and $\bz\in\NN$, \[\lim_{\substack{n\to +\infty\\k\to+\infty}}\PP_{\x_0}\left(  \X_k=\bz\mid\X_{k+n}\neq \z,\,T<+\infty \right) =\overline{\mu}\left( \bz\right),\] where $\overline{\mu}$ is the size-biased Yaglom distribution of  $\big(\overline{\X}_k\big)_{k\geqslant 0}$,  the associated process with respect to $\q$. \end{theorem}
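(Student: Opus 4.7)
My plan is to reduce the problem to the subcritical case via Remark~\ref{rem: subcritic} and then exploit a product decomposition of the conditional probability that cleanly separates the parameters $k$, $n$, and $k+n$. As in the proof of the previous theorem, I would first use Remark~\ref{rem: subcritic} to reduce to the subcritical regime: conditioning on $\{T<+\infty\}$ replaces $\X$ by its associated subcritical process $\overline{\X}$ with respect to $\q$, and in the supercritical case $\overline{\X}$ automatically has offspring distributions with all moments finite, since $\q<\1$ implies convergence of $\overline{\ef}$ on a neighbourhood of $\1$. Thus without loss of generality $\X$ itself is subcritical and satisfies $(A_3)$, hence in particular $(A_2)$; moreover $\overline{\X}=\X$, $\overline{\mu}=\mu$, $\overline{\nu}=\nu$, $\overline{\bu}=\bu$, and $\{T<+\infty\}$ is almost sure.

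Next, by the Markov property the conditional probability factorises as
\begin{align*}
\PP_{\x_0}(\X_k=\bz\mid\X_{k+n}\neq\z)&=\frac{\PP_{\x_0}(\X_k=\bz)\,\PP_{\bz}(\X_n\neq\z)}{\PP_{\x_0}(\X_{k+n}\neq\z)}\\
&=\frac{\bigl(\rho^{-k}\PP_{\x_0}(\X_k=\bz)\bigr)\bigl(\rho^{-n}\PP_{\bz}(\X_n\neq\z)\bigr)}{\rho^{-(k+n)}\PP_{\x_0}(\X_{k+n}\neq\z)},
\end{align*}
where compensating powers of $\rho$ have been inserted. The three factors depend only on the indices $k$, $n$, and $k+n$ respectively, each of which tends to infinity as soon as $k,n\to+\infty$. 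By \eqref{againbasic}, the second numerator factor tends to $\gamma\,\bz\cdot\bu$ and the denominator tends to $\gamma\,\x_0\cdot\bu$; combining \eqref{Yaglom} with \eqref{againbasic} yields $\rho^{-k}\PP_{\x_0}(\X_k=\bz)\to\gamma(\x_0\cdot\bu)\nu(\bz)$. Because each factor is a genuine one-variable limit, the joint limit of the ratio equals the ratio of limits, namely $\gamma(\bz\cdot\bu)\nu(\bz)$, and no uniformity argument is required.

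Finally, I would identify this expression with $\overline{\mu}(\bz)=\mu(\bz)$. The normalising constant in \eqref{size biased} can be computed via \eqref{first moment g}:
\[
\sum_{\y\neq\z}\y\cdot\bu\,\nu(\y)=\sum_{i=1}^d u_i\frac{\partial g(\1)}{\partial r_i}=\sum_{i=1}^d u_iv_i\gamma^{-1}=\gamma^{-1},
\]
using the normalisation $\bu\cdot\bv=1$; hence $\mu(\bz)=\gamma(\bz\cdot\bu)\nu(\bz)$, exactly matching the limit above. The main conceptual step is to recognise that inserting the powers of $\rho$ decouples the three factors along the independent indices $k$, $n$, and $k+n$, so that the apparently delicate joint double limit reduces to three elementary one-variable asymptotics; the essential technical ingredients are \eqref{Yaglom}, \eqref{againbasic} and the moment identity \eqref{first moment g}.
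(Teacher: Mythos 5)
Your proof is correct, and it takes a genuinely different and substantially simpler route than the paper. The paper's own proof stays at the level of generating functions: it writes $\EE_{\x_0}[\er^{\X_k}\mid \X_{k+n}\neq\z]$ as a ratio of iterates of $\ef$, performs a second-order Taylor expansion in the small quantity $\1-\ef_n(\z)$, establishes separately the convergence of $\rho^{-k}\partial f_{k,j}/\partial r_i$ via a Cauchy-sequence argument, and controls the second-order remainder using the covariance recursion \eqref{har} --- which is precisely why the paper must invoke $(A_3)$ when $\rho<1$. You instead work at the level of probabilities: the Markov property gives the exact factorization
\[
\PP_{\x_0}(\X_k=\bz\mid\X_{k+n}\neq\z)
=\frac{\bigl(\rho^{-k}\PP_{\x_0}(\X_k=\bz)\bigr)\,\bigl(\rho^{-n}\PP_{\bz}(\X_n\neq\z)\bigr)}{\rho^{-(k+n)}\PP_{\x_0}(\X_{k+n}\neq\z)},
\]
and the decisive observation is that each of the three factors is a function of a \emph{single} index ($k$, $n$, or $k+n$), each of which tends to infinity along any sequence with $k,n\to\infty$; the joint double limit therefore collapses to three elementary one-variable limits from \eqref{Yaglom} and \eqref{againbasic}, with no uniformity argument needed and with the Taylor remainder entirely avoided. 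The identification of the limit with $\overline{\mu}$ via \eqref{first moment g} and $\bu\cdot\bv=1$ is also correct. A noteworthy by-product of your method is that it requires only $(A_2)$ in the subcritical case rather than the stronger $(A_3)$ assumed in the paper's statement (and $(A_2)$ is automatic in the supercritical case, as you note), so your argument in fact proves the theorem under weaker hypotheses than the paper does.
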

\begin{remark}
This implies in particular that for any $0<t<1$,
\[\lim_{k\to +\infty}\PP_{\x_0}\left(  \X_{\lf kt\rf}=\bz\mid\X_{k}\neq \z,\,T<+\infty \right) =\overline{\mu}\left( \bz\right).\]
\end{remark}

\begin{remark}In the critical case, the $Q$-process is transient and the obtained limit is degenerate. A suitable normalization in order to obtain a non-degenerate probability distribution is of the form $\X_k/k$. However, even with this normalization, the previous result does not hold in the critical case. Indeed, we know for instance that in the monotype case, a critical process with finite variance $\sigma^2>0$ satisfies for each $z\geqslant 0$ (\cite{LaNey68}),
\begin{align*}\lim_{k\to +\infty}\lim_{n\to +\infty}\PP_{1}\left(  \frac{X_k}{k}\leqslant z\mid X_{k+n}\neq 0\right) &=1-e^{-\frac{2z}{\sigma^2}} ,\\
\lim_{n\to +\infty}\lim_{k\to +\infty}\PP_{1}\left(  \frac{X_k}{k}\leqslant z\mid X_{k+n}\neq 0 \right)&=1-e^{-\frac{2z}{\sigma^2}}-\frac{2z}{\sigma^2}e^{-\frac{2z}{\sigma^2}}.
\end{align*}
\end{remark}

\begin{proof}Thanks to Remark \ref{rem: subcritic} and to the fact that if $\rho>1$, $\mathbb{E}_{\mathbf{e}_i}( \overline{X}_{1,j} \overline{X}_{1,l})<+\infty$, we can assume without loss of generality that $\rho < 1$. For each $n$, $k\in\mathbb{N}$ and $\er\in[0,1]^d$, \[\EE_{\x_0}\left(\er^{\X_k}\mathbf{1}_{\X_{k+n}=\z} \right)=\sum_{\y\in\NN}\PP_{\x_0}\left(\X_k=\y\right) \er^{\y}\PP_{\y}\left(\X_{n}=\z \right)=\ef_k\left( \er\ef_{n}\left( \z\right) \right)^{\x_0},\] which leads to
\begin{align}\label{begin}
\EE_{\x_0}\left[\er^{\X_k}\mid \X_{k+n}\neq \z  \right]&=\frac{\EE_{\x_0}\left(\er^{\X_k} \right) -\EE_{\x_0}\left(\er^{\X_k}\mathbf{1}_{\X_{k+n}=\z} \right)}{1-\PP_{\x_0}\left(\X_{k+n}= \z  \right)}\nonumber\\&=\frac{\ef_k\left( \er\right)^{\x_0} -\ef_k\left( \er\ef_{n}\left( \z\right) \right)^{\x_0} }{1-\ef_{k+n}\left( \z\right)^{\x_0}}.
\end{align}
By Taylor's theorem,
\begin{multline}\label{Taylor}\ef_k\left( \er\right)^{\x_0} -\ef_k\left( \er\ef_{n}\left( \z\right) \right)^{\x_0}=\sum_{i=1}^d\frac{\partial \ef_k^{\x_0}\left( \er\right)}{\partial r_i}r_i\left( 1-f_{n,i}\left( \z\right) \right) \\-\sum_{i,j=1\ldots d} \frac{r_ir_j\left( 1-f_{n,i}\left( \z\right) \right)( 1-f_{n,j}\left( \z\right) )}{2}\int_0^1\left( 1-t\right) \frac{\partial ^2\ef_k^{\x_0}\left( \er-t \er\left( \mathbf{1}-\ef_{n}\left( \z\right)\right) \right)}{\partial r_i\partial r_j}dt,
\end{multline}
with 
\begin{equation}\label{calculus}\frac{\partial \ef_k^{\x_0}\left( \er\right)}{\partial r_i}=\sum_{j=1}^dx_{0,j}\frac{\partial f_{k,j}\left( \er\right) }{\partial r_i}\ef_k\left( \er\right)^{\x_0-\mathbf{e}_j}.
\end{equation}

 Let us first prove the existence of $\lim_k\rho^{-k}\frac{\partial  f_{k,j}\left(\er\right)}{\partial r_i}$ for each $i,j$ and $\er\in[0,1]^d$. For each $k$, $p\in\mathbb{N}$ and $a>0$,
\begin{multline}\label{en fait si}
 \Big| \rho^{-k}\frac{\partial f_{k,j}\left(\er\right)}{\partial r_i}- \rho^{-(k+p)}\frac{\partial f_{k+p,j}\left(\er\right)}{\partial r_i} \Big|\\\leqslant \sum_{\substack{\bz\in\NN\\\|\bz\|_2< a}}z_i\er^{\bz-\mathbf{e}_i}\Big|\rho^{-k}P_k(\mathbf{e}_j, \bz) - \rho^{-(k+p)}P_{k+p}(\mathbf{e}_j, \bz)\Big| \\+ \rho^{-k}\EE_{\mathbf{e}_j}\left( X_{k,i}\mathbf{1}_{\|\X_k\|_2\geqslant a}\right) +\rho^{-(k+p)}\EE_{\mathbf{e}_j}\left( X_{k+p,i}\mathbf{1}_{\|\X_{k+p}\|_2\geqslant a}\right).
\end{multline}
By Cauchy-Schwarz and Markov's inequalities, $ \EE_{\mathbf{e}_j}( X_{k,i}\mathbf{1}_{\|\X_k\_2|\geqslant a})\leqslant \frac{1}{a}\EE_{\mathbf{e}_j}( \|\X_{k}\|_2^2).$ For each $\x\in\NN$, let  $\mathbf{C}_{\x,k}$ be the matrix $( \mathbb{E}_{\x}( X_{k,i}X_{k,j}))_{1\leqslant i,j\leqslant d}$. According to \cite{Har63}, 
\begin{equation}\label{har}
\mathbf{C}_{\x,k}=( \mathbf{M}^T) ^k\mathbf{C}_{\x,0}\mathbf{M}^k+\sum_{n=1}^k( \mathbf{M}^T) ^{k-n}\left( \sum_{i=1}^d\mathbf{\Sigma}^{i}\EE_{\x}\left(X_{n-1,i} \right)  \right) \mathbf{M}^{k-n}.
\end{equation}
Thanks to \eqref{mean} this implies the existence of some $C>0$ such that for all $k\in\mathbb{N}$, 
$\rho^{-k}\EE_{\mathbf{e}_j}( \|\X_{k}\|_2^2)=\rho^{-k}\sum_{i=1}^d[ \mathbf{C}_{\mathbf{e}_j,k}]_{ii} \leqslant C$, and the two last right terms in \eqref{en fait si} can be bounded by $2Ca^{-1}$. As for the first right term in  \eqref{en fait si}, it is thanks to \eqref{againbasic} and \eqref{Nak4} as small as desired for $k$ large enough. This proves that $(  \rho^{-k}\frac{\partial f_{k,j}\left(\er\right)}{\partial r_i})_k$ is a Cauchy sequence.

Its limit is then necessarily, for each  $\er\in[0,1]^d$,
\begin{equation}\label{limit to prove}
\lim_{k\to+\infty} \rho^{-k}\frac{\partial f_{k,j}\left(\er\right)}{\partial r_i}=\gamma u_j\frac{\partial g\left(\er\right)}{\partial r_i},
\end{equation}
where $g$ is defined in Subsection \ref{sec:nonext}. Indeed, since assumption $(A_3)$ ensures that $(A_2)$ is satisfied, we can deduce from \eqref{Yaglom} and \eqref{againbasic}
 that $\lim_{k}\rho^{-k}(f_{k,j}( \er)-f_{k,j}( \z))=\gamma u_j g(\er)$. Hence, using the fact that $0\leqslant \rho^{-k}\frac{\partial f_{k,j}\left( \er\right)}{\partial r_i} \leqslant  \rho^{-k}m^{(k)}_{ji}$,  which thanks to  \eqref{mean} is bounded, we obtain by Lebesgue's dominated convergence theorem that for each $h\in\mathbb{R}$ such that $\er+h\mathbf{e}_i\in[0,1]^d$, \begin{equation*}\gamma u_j g(\er+h\mathbf{e}_i)-\gamma u_j g(\er)
=\lim_{k\to+\infty} \int_0^h\rho^{-k}\frac{\partial  f_{k,j}\left(\er+t\mathbf{e}_{i}\right)}{\partial r_i}dt= \int_0^h\lim_{k\to+\infty}\rho^{-k}\frac{\partial  f_{k,j}\left(\er+t\mathbf{e}_{i}\right)}{\partial r_i}dt,\end{equation*}
proving \eqref{limit to prove}.

In view of \eqref{Taylor}, let us note that for each $\er\in[0,1]^ d$, there exists thanks to \eqref{mean} and \eqref{har} some $C>0$ such that for each $k\in\mathbb{N}$,
$ 0\leqslant \rho^{-k}\frac{\partial ^2\ef_k^{\x}\left( \er \right)}{\partial r_i\partial r_j}\leqslant\rho^{-k}\EE_{\x}[ X_{k,j}( X_{k,i}-\delta_{ij})]\leqslant C$, hence  for each $k$, $n\in\mathbb{N}$,\[\rho^{-k}\int_0^1\left( 1-t\right) \frac{\partial ^2\ef_k^{\x}\left( \er-t \er\left( \mathbf{1}-\ef_{n}\left( \z\right)\right) \right)}{\partial r_i\partial r_j}dt\leqslant \frac{C}{2}.\]
Together with \eqref{againbasic} this entails that the last right term in  \eqref{Taylor} satisfies
\[\lim_{\substack{n\to +\infty\\k\to+\infty}}\rho^{-(k+n)}\sum_{i,j=1\ldots d}\frac{r_ir_j\left( 1-f_{n,i}\left( \z\right) \right)( 1-f_{n,j}\left( \z\right) )}{2} \int_0^1\ldots\ dt=0.\]
Moreover, we deduce from \eqref{calculus}, \eqref{limit to prove} and $\lim_n\ef_n(\er)=\mathbf{1}$ that the first right term in  \eqref{Taylor} satisfies
\[\lim_{\substack{n\to +\infty\\k\to+\infty}}\rho^{-(k+n)}\sum_{i=1}^d\frac{\partial \ef_k^{\x_0}\left( \er\right)}{\partial r_i}r_i\left( 1-f_{n,i}\left( \z\right) \right)=\gamma^2\,\x_0\cdot\bu \sum_{i=1}^dr_iu_i\frac{\partial g\left(\er\right)}{\partial r_i}.\]
Recalling \eqref{begin} and \eqref{againbasic}, we have thus proven that for each $\er\in[0,1]^d$,
\[\lim_{\substack{n\to +\infty\\k\to+\infty}}\EE_{\x_0}\left[\er^{\X_k}\mid \X_{k+n}\neq \z  \right] =\gamma \sum_{i=1}^dr_iu_i\frac{\partial g\left(\er\right)}{\partial r_i}=\gamma \sum_{\bz\in\NN\setminus\{\z\}}\bz\cdot\bu\,\nu\left( \bz\right)\er^{\bz}.\]
Finally, \eqref{first moment g} leads to
$\gamma \sum_{i=1}^du_i\frac{\partial g\left(\mathbf{1}\right)}{\partial r_i}=1$, and thus
\[\lim_{\substack{n\to +\infty\\k\to+\infty}}\EE_{\x_0}\left[\er^{\X_k}\mid \X_{k+n}\neq \z  \right] =\frac{\sum_{\bz\in\NN\setminus\{\z\}}\bz\cdot\bu\,\nu\left( \bz\right)\er^{\bz}}{\sum_{\y\NN\setminus\{\z\}}\y\cdot\bu\, \nu\left( \y\right)},\]
which by \eqref{size biased} is a probability generating function.
\end{proof}

\section{Conditioning on the total progeny}
\label{sec:totalprog}

Let $\mathbf{N}=\left( N_1,\ldots,N_d\right)$ denote the total progeny of the process $\left(\X_k\right)_{k\geqslant 0} $, where for each $i=1\ldots d$,
$N_i=\sum_{k=0}^{+\infty}X_{k,i}$,
and $N_i=+\infty$ if the sum diverges. Our aim is to study the behavior of $\left(\X_k \right)_{k\geqslant 0} $ conditioned on the event $\{\N=\lf n\mathbf{w} \rf\}$, as $n$ tends to infinity, for some specific positive vector $\mathbf{w}$. We recall that in the critical case, the GW process suitably normalized and conditioned on non-extinction in the same fashion as in \eqref{Yaglom}, converges to a limit law supported by the ray $\{\lambda\bv: \lambda\geqslant 0\}\subset \mathbb{R}_+^d$. In this sense,
its left eigenvector $\bv$ describes "typical limiting type proportions", as pointed out in \cite{FleiVat06}. As we will see in Lemma \ref{lem1}, conditioning a GW process on a given total progeny size comes down to conditioning an associated critical process on the same total progeny size. For this reason, the vector $\mathbf{w}$ will be chosen to be the left eigenvector of the associated critical process. It then appears that, similarly as in the monotype case (\cite{Ken75}), the process conditioned on an infinite total progeny $\{\N=\lf n\mathbf{w} \rf\}$, $n\to\infty$, has the structure of the $Q$-process of a critical process, and is consequently transient. This is the main result, stated in Theorem \ref{thm2}.

\begin{theorem}\label{thm2} Let $\left(\X_k\right)_{k\geqslant 0} $ be a multitype GW process satisfying $(A_1)$. We assume in addition that
\begin{enumerate}
\item[$(A_4)$] there exists $\ba>\z$ such that the associated process with respect to $\ba$ is critical,
\item[$(A_5)$] for each $j=1\ldots d$, there exist $i=1\ldots d$ and $\ka\in\NN$ such that $p_i\left(\ka\right) >0$ and $p_i\left(\ka+\mathbf{e}_j\right) >0$,
\item[$(A_6)$] the associated process with respect to $\ba$ admits moments of order $d+1$, and its covariance matrices are positive-definite.
\end{enumerate}
Then for all $\x_0\in\NN\setminus\{\z\}$, $k_1\leqslant\ldots\leqslant k_j\in\mathbb{N}$,  and $\x_1,\ldots,\x_j\in\NN$,
\begin{equation}\label{lim}
\lim_{n\to+\infty}\PP_{\x_0}\big( \X_{k_1}=\x_1,\ldots, \X_{k_j}=\x_j\mid\N=\lf n\overline{\bv} \rf\big)\\=\frac{\x_j\cdot\overline{\bu}}{\x_0\cdot\overline{\bu}}\PP_{\x_0}\big( \overline{\X}_{k_1}=\x_1,\ldots, \overline{\X}_{k_j}=\x_j\big),
\end{equation}
where $\big(\overline{\X}_k\big)_{k\geqslant 0}$ is  the associated process with respect to $\ba$.
\end{theorem}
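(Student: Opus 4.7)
The plan is to reduce the problem to a critical multitype GW process via the device mentioned in the discussion preceding the theorem, and then to extract the limit by pairing a Markov decomposition at time $k_j$ with a multivariate local limit theorem for the total progeny of the critical associated process.

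I would first exploit the one-to-one correspondence between trajectories of $(\X_k)$ and of $(\overline{\X}_k)$ implicit in \eqref{off}. A direct telescoping computation shows that any admissible finite trajectory $\y_0=\x_0,\ldots,\y_T=\z$ carries weight
\[
\PP_{\x_0}(\X_0=\y_0,\ldots,\X_T=\y_T)=\ba^{\x_0}\Bigl(\frac{\ef(\ba)}{\ba}\Bigr)^{\sum_{k<T}\y_k}\overline{\PP}_{\x_0}(\overline{\X}_0=\y_0,\ldots,\overline{\X}_T=\y_T),
\]
where quotient and power are understood coordinate-wise. Summing over trajectories compatible with $\{\X_{k_1}=\x_1,\ldots,\X_{k_j}=\x_j\}\cap\{\N=\lf n\overline{\bv}\rf\}$, the $n$-dependent weight $\ba^{\x_0}(\ef(\ba)/\ba)^{\lf n\overline{\bv}\rf}$ factors identically out of both numerator and denominator of the conditional probability on the left of \eqref{lim}. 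This reduces the theorem to the case where $(\X_k)$ is itself critical, and I shall henceforth drop all overlines and prove \eqref{lim} in that critical setting.

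In the critical setting, the Markov property at time $k_j$ yields, with $A_j=\{\X_{k_1}=\x_1,\ldots,\X_{k_j}=\x_j\}$ and $\s=\sum_{k=0}^{k_j-1}\X_k$,
\[
\PP_{\x_0}(A_j,\,\N=\lf n\bv\rf)=\EE_{\x_0}\bigl[\mathbf{1}_{A_j}\,\PP_{\x_j}(\N=\lf n\bv\rf-\s)\bigr],
\]
so the problem reduces to the asymptotics, as $n\to+\infty$, of ratios $\PP_{\x}(\N=\lf n\bv\rf-\s_0)/\PP_{\x_0}(\N=\lf n\bv\rf)$ for $\x\in\{\x_0,\x_j\}$ and $\s_0$ in a bounded range. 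The main obstacle, and the heart of the proof, is a multivariate local limit theorem of the shape
\[
\PP_{\x}(\N=\lf n\bv\rf)\sim\frac{C\,(\x\cdot\bu)}{n^{(d+2)/2}},\qquad n\to+\infty,
\]
for a positive constant $C$ independent of $\x$. The natural route is a multitype Otter--Dwass--Kemperman identity, which expresses $\PP_{\x}(\N=\ka)$ as a weighted probability that a $d$-dimensional random walk (with steps obtained from the offspring laws, corrected by $-\mathbf{e}_j$ when an individual of type $j$ is processed) first hits $-\x$ at step $\|\ka\|_1$. Under the present hypotheses, $(A_4)$ fixes the walk's drift at zero, $(A_5)$ supplies the lattice-span / aperiodicity requirement, and $(A_6)$ furnishes simultaneously the $(d+1)$-st moment bound and the non-degenerate covariance needed for a $d$-dimensional local central limit theorem of Gnedenko type; the rate $n^{-d/2}$ from the walk combines with the Kemperman cyclic weight $1/\|\ka\|_1\sim 1/n$ to yield the announced $n^{-(d+2)/2}$ decay, and the linear proportionality in $\x\cdot\bu$ is exactly the Kemperman prefactor encoding the starting mass.

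Granted this local estimate, the ratio $\PP_{\x_j}(\N=\lf n\bv\rf-\s)/\PP_{\x_0}(\N=\lf n\bv\rf)$ converges pointwise to $(\x_j\cdot\bu)/(\x_0\cdot\bu)$ uniformly on $\{\|\s\|_1\ll n\}$, while the accompanying uniform upper bound $\PP_{\x_j}(\N=\ka)\leqslant C'(\x_j\cdot\bu)\|\ka\|_1^{-(d+2)/2}$ provides the domination required to interchange limit and expectation. Since $(A_6)$ entails $(A_3)$, the cumulative progeny $\s$ has finite expectation on $A_j$, so large values of $\s$ contribute negligibly and the dominated convergence theorem applies. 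Passing to the limit gives
\[
\lim_{n\to+\infty}\frac{\PP_{\x_0}(A_j,\,\N=\lf n\bv\rf)}{\PP_{\x_0}(\N=\lf n\bv\rf)}=\frac{\x_j\cdot\bu}{\x_0\cdot\bu}\,\PP_{\x_0}(A_j),
\]
which, combined with the initial reduction to the critical process, is precisely \eqref{lim}.
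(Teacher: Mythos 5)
Your plan is the same as the paper's: (i) a tilting identity reducing everything to the critical associated process (the paper's Lemma~\ref{lem1}); (ii) a local limit theorem $\PP_{\x}(\N=\lf n\bv\rf)\sim C(\x\cdot\bu)\,n^{-(d/2+1)}$ (the paper's Proposition~\ref{prop: convergence}); and (iii) a Markov decomposition at time $k_j$ combined with an estimate killing the contribution of large intermediate progeny. Steps (i) and (iii) are correct in substance. The genuine gap is in step (ii): you assert that ``the linear proportionality in $\x\cdot\bu$ is exactly the Kemperman prefactor encoding the starting mass,'' extrapolating from the one-type Otter--Dwass formula $\PP_x(N=n)=\frac{x}{n}\PP(S_n=n-x)$, where the prefactor really is linear in $x$. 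In the multitype cyclic/Lagrange formula \eqref{totprogeny} there is no such scalar prefactor; the dependence on the initial state sits inside a $d\times d$ determinant with random entries. Extracting the factor $\x\cdot\bu$ from this is the hard part of the theorem: after normalizing rows and replacing the last row by $\x_0$, one expands the determinant by Leibniz, shows (using the $(d+1)$-th moments to control the error from the terms that actually involve the random fluctuations, at rate $n^{-(d+1)/2}=o(n^{-d/2})$) that only the deterministic term survives after multiplication by $n^{d/2}$, identifies that surviving term as $\x_0\cdot\mathbf{D}$ where $\mathbf{D}$ is the vector of $(d,i)$-cofactors of $\mathbf{I}-\M$, and finally uses criticality ($\det(\mathbf{I}-\M)=0$) to prove $\mathbf{D}$ is a right eigenvector of $\M$ for eigenvalue $1$, hence proportional to $\bu$. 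Without this linear-algebraic argument the proof is only a heuristic, and it is also here (not in the local CLT, which needs only finite non-degenerate covariance plus the aperiodicity from $(A_5)$) that the $(d+1)$-th moment assumption of $(A_6)$ actually does its work.

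A secondary imprecision: the dominated-convergence step is not as immediate as you state. The candidate dominator $\PP_{\x_j}(\N=\ka)/\PP_{\x_0}(\N=\lf n\bv\rf)\lesssim\bigl(n/\|\ka\|_1\bigr)^{(d+2)/2}$ with $\ka=\lf n\bv\rf-\s+\x_j$ is \emph{not} bounded uniformly in $n$ for fixed $\s$: it blows up when $\lf n\bv\rf$ is close to $\s$. One can save the argument by noting $\sup_n$ of this ratio is $\lesssim\|\s\|_1^{(d+2)/2}$ and then invoking a $(d+2)/2$-th moment of $\s=\N_{k_j}-\x_j$ on $A_j$ --- but ``$\s$ has finite expectation'' is not the right sufficient condition; you need more than one moment. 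The paper sidesteps this delicacy by splitting the sum at $\el<\lf n^{\varepsilon}\bv\rf$ for some $3/4<\varepsilon<1$, using uniformity of the local estimate on the first range and a Markov inequality on $\|\N_{k_j}\|_1^{d+1}$ (with $(d+1)\varepsilon>d/2+1$) on the second.
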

 
The limiting process defined by \eqref{lim} is thus Markovian with transition probabilities
\begin{equation*}
\overline{Q}_1\left( \x,\y\right) =\frac{\y\cdot\overline{\bu}}{\x\cdot\overline{\bu}}\overline{P}_1\left( \x,\y\right)=\frac{\ba^{\y}}{\ef\left( \ba\right) ^{\x}}\frac{\y\cdot\overline{\bu}}{\x\cdot\overline{\bu}}P_1\left( \x,\y\right),\ \ \ \ \ \ \x,\y\in\NN\setminus\{\z\},
\end{equation*}and corresponds to the $Q$-process associated with the critical process $\big(\overline{\X}_k\big)_{k\geqslant 0}$.

\begin{remark}$ $
\begin{itemize}
\item If $d=1$, the conditional event $\{\N=\lf n\overline{\bv} \rf\}$ reduces to $\{N=n\}$, as studied in \cite{Ken75}, in which assumptions $(A_4)$--$(A_6)$ are also required\footnote{Since the author's work, it has been proved in \cite{AbrDelGuo15} that in the critical case and under $(A_1)$, Theorem \ref{thm2} holds true under the minimal assumptions of aperiodicity of the offspring distribution (implied by $(A_5)$) and the finiteness of its first order moment.}.  
\item   If $\left(\X_k\right)_{k\geqslant 0} $ is critical, assumption $(A_4)$ is satisfied with $\ba=\1$. This assumption is also automatically satisfied if $\left(\X_k\right)_{k\geqslant 0} $ is supercritical. Indeed, as mentioned in Remark \ref{rem: subcritic}, the associated process with respect to $\z<\q<\1$ is subcritical and thus satisfies $\overline{\rho}< 1$. The fact that $\rho>1$ and the continuity of the Perron's root as a function of the mean matrix coefficients then ensures the existence of some $\q\leqslant \ba \leqslant \1$ satisfying $(A_4)$. Note however that such an $\ba$ is not unique.
\item For any $\ba>\z$, $p_i$ and $\overline{p}_i$ share by construction the same support. As a consequence, $\left(\X_k\right)_{k\geqslant 0} $ satisfies $(A_5)$ if and only if $\left(\overline{\X}_k\right)_{k\geqslant 0} $ does. Moreover, a finite covariance matrix $\mathbf{\Sigma}^i$ is positive-definite if and only if there does not exist any  $c\in\mathbb{R}$ and $\x\neq \z$ such that $\x\cdot\X=c$ $\PP_{\mathbf{e}_i}$-almost-surely, hence if and only if  $\x\cdot\overline{\X}=c$ $\PP_{\mathbf{e}_i}$-almost-surely. Consequently, provided it exists, $\mathbf{\Sigma}^i$ is positive-definite if and only if $\overline{\mathbf{\Sigma}}^i$ is positive-definite as well.
\end{itemize}
\end{remark}

We shall first show in Lemma \ref{lem1} that for any $\ba$, the associated process $\big(\overline{\X}_k\big)_{k\geqslant 0}$ with respect to $\ba$, conditioned on $\{\overline{\N}=\n\}$, has the same probability distribution as the original process conditioned on $\{\N=\n\}$, for any $\n\in\NN$. It is thus enough to prove Theorem \ref{thm2} in the critical case, which is done at the end of the article.

 It follows from Proposition 1 in \cite{Good75} or directly from Theorem 1.2 in \cite{ChauLiu11} that the  probability distribution of the total progeny in the multitype case is given for each $\x_0$, $\n\in\NN$ with $\n>\z$, $\n\geqslant \x_0 $ by
\begin{equation}\label{totprogeny}
\PP_{\x_0 }\left( \N=\n\right)=\frac{1}{n_1\ldots n_d}\sum_{\substack{\ka^{1},\ldots,\ka^{d}\in\NN\\\ka^{1}+\ldots+\ka^{d}=\n-\x_0}}\det \begin{pmatrix}
n_1\mathbf{e}_1-\ka^1\\\cdots \\n_{d}\mathbf{e}_{d}-\ka^{d}
\end{pmatrix}   \prod_{i=1}^d p_i^{*n_i}\left( \ka^i\right).\end{equation}

\begin{lemma}\label{lem1}Let $\left(\X_k \right)_{k\geqslant 0} $ be a multitype GW process. Then, for any $\ba>\z$, the associated process $\big(\overline{\X}_k\big) _{k\geqslant 0}$ with respect to $\ba$ satisfies for any $\x_0\in\NN$, $k_1\leqslant\ldots\leqslant k_j\in\mathbb{N}$, $\x_1,\ldots,\x_j\in\NN$ and $\n\in\NN$,
\begin{equation*}
\PP_{\x_0}\left( \X_{k_1}=\x_1,\ldots, \X_{k_j}=\x_j\mid\N=\n\right)=\PP_{\x_0}\left( \overline{\X}_{k_1}=\x_1,\ldots, \overline{\X}_{k_j}=\x_j\mid\overline{\N}=\n\right).
\end{equation*}
\end{lemma}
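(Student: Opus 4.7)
My plan is to establish the stronger (unconditional) identity
\[
\PP_{\x_0}\left(\overline{\X}_{k_1} = \x_1, \ldots, \overline{\X}_{k_j} = \x_j,\, \overline{\N} = \n\right) = \frac{\ba^{\n - \x_0}}{\ef(\ba)^{\n}}\, \PP_{\x_0}\left(\X_{k_1} = \x_1, \ldots, \X_{k_j} = \x_j,\, \N = \n\right),
\]
whose prefactor depends only on $\x_0$ and $\n$. Marginalizing over $\x_1, \ldots, \x_j$ yields $\PP_{\x_0}(\overline{\N} = \n) = \ba^{\n - \x_0}\ef(\ba)^{-\n}\, \PP_{\x_0}(\N = \n)$, and dividing the two identities gives the lemma.

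To prove the identity, I would first note that \eqref{off} at $n=1$, combined with summing over offspring decompositions, gives $\overline{P}_1(\x, \y) = \ba^{\y}\ef(\ba)^{-\x}\, P_1(\x, \y)$. Multiplying this along a path $\x = \y_0 \to \y_1 \to \cdots \to \y_m = \y$ and summing over the intermediate states subject to the constraint $\sum_{k=0}^{m-1} \y_k = \n'$, the $\ba$-numerators combine to $\ba^{\n' - \x + \y}$ and the $\ef(\ba)$-denominators to $\ef(\ba)^{\n'}$, yielding
\[
\PP_{\x}\left(\overline{\X}_m = \y,\ \textstyle\sum_{k=0}^{m-1}\overline{\X}_k = \n'\right) = \frac{\ba^{\n' - \x + \y}}{\ef(\ba)^{\n'}}\, \PP_{\x}\left(\X_m = \y,\ \textstyle\sum_{k=0}^{m-1}\X_k = \n'\right).
\]
For the "tail" starting from a state $\x$, I would apply formula \eqref{totprogeny} directly: substituting $\overline{p}_i^{*n_i}(\ka^i) = \ba^{\ka^i} f_i(\ba)^{-n_i}\, p_i^{*n_i}(\ka^i)$ pulls the path-independent factor $\ba^{\n - \x}\ef(\ba)^{-\n}$ out of every term of the sum, giving $\PP_{\x}(\overline{\N} = \n) = \ba^{\n - \x}\ef(\ba)^{-\n}\, \PP_{\x}(\N = \n)$.

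By the Markov property the joint event $\{\X_{k_1} = \x_1, \ldots, \X_{k_j} = \x_j,\, \N = \n\}$ decomposes as a sum over partitions $\n = \n^{(0)} + \cdots + \n^{(j)}$ of products of $j$ segment factors (with endpoints $\x_l, \x_{l+1}$ and partial sum $\n^{(l)}$, as above) and a single tail factor $\PP_{\x_j}(\N = \n^{(j)})$. Replacing each factor by its associated-process analogue from the previous paragraph, the numerator $\ba$-exponents sum to
\[
\sum_{l=0}^{j-1}\left(\n^{(l)} - \x_l + \x_{l+1}\right) + \left(\n^{(j)} - \x_j\right) = \n - \x_0
\]
(the intermediate $\x_l$ telescope, leaving only $-\x_0$), and the denominators combine to $\ef(\ba)^{\n}$. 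This common factor is path-independent, so it factors out of the sum and cancels upon conditioning. The argument is entirely algebraic; the only subtle point is the bookkeeping that verifies the telescoping of the endpoint contributions across consecutive segments, which is immediate once one writes out the exponents carefully.
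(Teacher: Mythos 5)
Your proof is correct and follows essentially the same strategy as the paper: establish a path-independent change-of-measure factor $\ba^{\n-\x_0}/\ef(\ba)^{\n}$ relating the joint law of the associated process to that of the original, by multiplying the per-step identity $\overline{P}_1(\x,\y)=\ba^{\y}\ef(\ba)^{-\x}P_1(\x,\y)$ along segments and using \eqref{totprogeny} for the tail, then cancel this factor upon conditioning. The paper performs the same cancellation but collapses your $j$ segment factors into a single block $\{\overline{\X}_{k_1}=\x_1,\ldots,\overline{\X}_{k_j}=\x_j,\overline{\N}_{k_j}=\el\}$ (so there is just one segment identity and one tail identity), which is the same computation with slightly coarser bookkeeping.
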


\begin{proof}From \eqref{off} and \eqref{totprogeny} , 
$\PP_{\ka}\left( \overline{\N}=\n\right)
 =\frac{\ba^{\n-\ka}}{\ef\left( \ba\right)^{\n} } \PP_{\ka}\left( \N=\n\right)$.
  For all $n\in\mathbb{N}$, we denote by $\N_n=\sum_{k=0}^n\X_k$ (resp. $\overline{\N}_n=\sum_{k=0}^n\overline{\X}_k$) the total progeny up to generation $n$ of $\left(\X_k \right)_{k\geqslant 0} $  (resp. $\big(\overline{\X}_k\big)_{k\geqslant 0}$). Then
\begin{align*}
 \PP_{\x_0}\left(\overline{\X}_{k_j}=\x_j,\overline{\N}_{k_j}=\el\right)&=\sum_{\substack{\bi_1,\ldots,\bi_{k_j-1}\in\NN\\\bi_1+\ldots+\bi_{k_j-1}=\el-\x_0-\x_j}}\overline{P}_1\left( \x_0,\bi_1\right) \ldots\overline{P}_1( \bi_{k_j-1},\x_j) \\
&=\sum_{\substack{\bi_1,\ldots,\bi_{k_j-1}\in\NN\\\bi_1+\ldots+\bi_{k_j-1}=\el-\x_0-\x_j}}\frac{\ba^{\bi_1}P_1\left( \x_0,\bi_1\right) }{\ef\left( \ba\right) ^{\x_0}}\ldots\frac{\ba^{\x_j}P_1( \bi_{k_j-1},\x_j)  }{\ef\left( \ba\right) ^{\bi_{k_j-1}}}\\&=\frac{\ba^{\el-\x_0}\PP_{\x_0}\left(\X_{k_j}=\x_j,\N_{k_j}=\el\right)}{\ef\left( \ba\right) ^{\el-\x_j}} ,
\end{align*}
and similarly
\begin{equation*}
 \PP_{\x_0}\Big( \overline{\X}_{k_1}=\x_1,\ldots, \overline{\X}_{k_j}=\x_j,\overline{\N}_{k_j}=\el\Big)\\=\frac{\ba^{\el-\x_0}\PP_{\x_0}\left( \X_{k_1}=\x_1,\ldots, \X_{k_j}=\x_j,\N_{k_j}=\el\right)}{\ef\left( \ba\right) ^{\el-\x_j}}  .
\end{equation*}
Consequently, thanks to the Markov property,
\begin{align*}
&\PP_{\x_0}\left( \overline{\X}_{k_1}=\x_1,\ldots, \overline{\X}_{k_j}=\x_j\mid\overline{\N}=\n\right)
\\
&\ \ =\sum_{\substack{\el\in\NN\\\el\leqslant \n}}\frac{\PP_{\x_0}\left( \overline{\X}_{k_1}=\x_1,\ldots, \overline{\X}_{k_j}=\x_j,\overline{\N}_{k_j}=\el\right)\PP_{\x_j}\left(\overline{\N}=\n-\el+\x_j\right)}{\PP_{\x_0}\left(\overline{\N}=\n\right)} \\
&\ \ =\PP_{\x_0}\left( \X_{k_1}=\x_1,\ldots, \X_{k_j}=\x_j\mid\N=\n\right).
\end{align*}
\end{proof}

Thanks to Lemma \ref{lem1}, it suffices to prove Theorem \ref{thm2} in the critical case. For this purpose, we prove the following convergence result for the total progeny of a critical GW process.
\begin{proposition}\label{prop: convergence}
 Let $\left(\X_k\right)_{k\geqslant 0} $ be a critical multitype GW process satisfying $(A_1)$, $(A_5)$ and $(A_6)$. Then there exists $C> 0$ such that for all $\x_0\in\NN$, 
\begin{equation}
\lim_{n\to+\infty}n^{\frac{d}{2}+1}\PP_{\x_0 }\left( \N=\lf n\bv \rf\right)=C\x_0\cdot \bu.
\end{equation}
\end{proposition}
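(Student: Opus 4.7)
I would interpret the formula \eqref{totprogeny} as a conditional expectation with respect to independent random vectors $Z^i\sim p_i^{*n_i}$, where $n_i=\lf nv_i\rf$:
\[
\PP_{\x_0}\bigl(\N = \lf n\bv\rf\bigr) = \frac{\PP\bigl(\textstyle\sum_{i=1}^d Z^i = \lf n\bv\rf - \x_0\bigr)}{n_1\cdots n_d}\, \EE\Bigl[\det\bigl(n_i\delta_{ij}-Z^i_j\bigr)_{1\leq i,j\leq d} \,\Bigm|\, \textstyle\sum_i Z^i = \lf n\bv\rf - \x_0\Bigr].
\]
The plan is to combine a multivariate local central limit theorem (LCLT) for the normalizing probability with an algebraic reduction of the determinant that exhibits its linearity in $\x_0$.

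\textbf{Algebraic step.} On the conditioning event, $n_j-Z^j_j=x_{0,j}+\sum_{k\neq j}Z^k_j$, so the matrix in the determinant rewrites as $\mathrm{diag}(\x_0)+R$, where $R_{ii}=\sum_{k\neq i}Z^k_i$ and $R_{ij}=-Z^i_j$ for $i\neq j$. A direct computation gives $\sum_i R_{ij}=0$ identically (not only on the event), so $\det R=0$ path by path. Expanding $\det(\mathrm{diag}(\x_0)+R)$ along the diagonal,
\[
\det(\mathrm{diag}(\x_0)+R)=\sum_{T\subseteq\{1,\ldots,d\}}\Bigl(\prod_{i\in T}x_{0,i}\Bigr)\det R_{T^c},
\]
the $T=\emptyset$ term vanishes, and for $|T|\geq 1$ each principal minor $\det R_{T^c}$ is at most of order $n^{d-|T|}$, so the leading contribution comes from the $|T|=1$ terms, which are linear in $\x_0$.

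\textbf{Asymptotics.} Under $(A_5)$ (aperiodicity) and $(A_6)$ (moments of order $d+1$ and positive-definite $\mathbf{\Sigma}^i$), the multivariate LCLT yields $\PP(\sum_i Z^i=\lf n\bv\rf-\x_0)=C_0\,n^{-d/2}(1+o(1))$ for an explicit $C_0>0$ depending on $\bv$ and the $\mathbf{\Sigma}^i$. Similarly $R$ concentrates around $\EE R\sim n\,\mathrm{diag}(\bv)(I-\M)$, giving
\[
\EE\bigl[\det R_{\{1,\ldots,d\}\setminus\{j\}}\bigm|\cdot\bigr]\sim n^{d-1}\prod_{i\neq j}v_i\cdot\det\bigl((I-\M)_{\{1,\ldots,d\}\setminus\{j\}}\bigr).
\]
Because $1$ is a simple Perron eigenvalue of $\M$ (by positive regularity), $\mathrm{adj}(I-\M)=c\,\bu^T\bv$ for a constant $c=\prod_{\lambda\neq 1}(1-\lambda)>0$ (product over non-Perron eigenvalues), hence $\det((I-\M)_{\{1,\ldots,d\}\setminus\{j\}})=c\,u_jv_j$. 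Summing the $|T|=1$ contributions yields $\EE[\det(\mathrm{diag}(\x_0)+R)\mid\cdot]\sim c\,n^{d-1}\prod_{i=1}^d v_i\cdot\bu\cdot\x_0$, and combining with the LCLT and the prefactor $1/\prod n_i$ produces $n^{d/2+1}\PP_{\x_0}(\N=\lf n\bv\rf)\to cC_0\,\bu\cdot\x_0$, whence $C=cC_0>0$.

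\textbf{Main obstacle.} The principal difficulty is a sufficiently quantitative LCLT: the conditional expectation of the determinant is a polynomial of degree $d$ in fluctuations of order $\sqrt n$, and one must verify that the $|T|\geq 2$ contributions and the subleading corrections to the $|T|=1$ terms remain $o(n^{d-1})$ in conditional expectation. This is where the moment condition of order $d+1$ in $(A_6)$ enters, through an Edgeworth-type LCLT with uniform error bounds; positive-definiteness of the $\mathbf{\Sigma}^i$ ensures the limiting Gaussian is nondegenerate.
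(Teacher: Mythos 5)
Your argument is correct and reaches the same limit, but via a genuinely different algebraic route from the paper's. Where the paper isolates $\x_0$ in a single row of the determinant (after factoring $\lf nv_d\rf$ out of the $d$-th row) and then applies a Leibniz expansion indexed by which factors are replaced by their centred versions, you use the conditioning to rewrite the whole matrix as $\mathrm{diag}(\x_0)+R$ with $\1 R=0$, so $\det R\equiv 0$ and the minor expansion $\sum_{T}(\prod_{i\in T}x_{0,i})\det R_{T^c}$ makes the linearity in $\x_0$ appear at the level of the $|T|=1$ terms; you then read off the constant from $\operatorname{adj}(I-\M)=c\,\bu^{T}\bv$. This is tidier conceptually, and your identification $\det\big((I-\M)_{\{1,\ldots,d\}\setminus\{j\}}\big)=cu_jv_j$ together with $c=\prod_{\lambda\neq 1}(1-\lambda)>0$ is a nice closed form that the paper only obtains implicitly (as: the cofactor vector $\mathbf{D}$ is a right $1$-eigenvector of $\M$, hence $\mathbf{D}=c\bu$). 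On the technical side, though, you propose an Edgeworth-type LCLT to control the subleading terms; the paper gets away with much less: the plain Davis--McDonald local limit theorem for $\PP(A_n)$, plus the Dharmadhikari--Jogdeo bound on the $(d+1)$-st central moment of $S^i_{n_i}/n_i$ and a Markov inequality, using that the centred factors are bounded on $A_n$. Concretely, each centred factor contributes $\varepsilon\PP(A_n)+O(\varepsilon^{-(d+1)}n^{-(d+1)/2})=o(\PP(A_n))$, so every term with at least one centred factor --- i.e.\ your $|T|\geq 2$ terms and the fluctuation corrections to the $|T|=1$ terms --- vanishes after dividing by $n^{d-1}\PP(A_n)$, without any Edgeworth expansion. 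You should therefore be able to close your argument elementarily: write $R_{T^c}=\EE R_{T^c}+(R_{T^c}-\EE R_{T^c})$, expand multilinearly, and apply the same Markov bound to every mixed term rather than invoking a refined LCLT.
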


\begin{proof}
From \eqref{totprogeny}, for each $n\geqslant \max_{i}v_i^{-1}$, $n\geqslant \max_{i} x_{0,i}v_i^{-1}$, 
 \begin{align*}
 \PP_{\x_0 }\left( \N=\lf n\bv \rf\right)&=\frac{1}{\prod_{i=1}^{d} \lf nv_i\rf }\EE\left[ \det \hspace{-1mm}\begin{pmatrix}
\lf nv_1\rf \mathbf{e}_1-\ES^1_{\lf nv_1\rf }\\\cdots\\\lf nv_d\rf \mathbf{e}_{d}-\ES^{d}_{\lf nv_d\rf }  
\end{pmatrix}\hspace{-1mm}\1_{\sum_{i=1}^d\ES^i_{\lf nv_i\rf }=\lf n\bv\rf -\x_0}\right]\nonumber\\
&=\frac{1}{ \lf nv_d\rf }\EE\left[\det \hspace{-1mm}\begin{pmatrix}\mathbf{e}_1-\ES^1_{\lf nv_1\rf }/\lf nv_1\rf\\\cdots\\ \mathbf{e}_{d-1}-\ES^{d-1}_{\lf nv_{d-1}\rf}/\lf nv_{d-1}\rf \\\x_0 \end{pmatrix}\hspace{-1mm}\1_{\sum_{i=1}^d\ES^i_{\lf nv_i\rf }=\lf n\bv\rf -\x_0}\right]\hspace{-1mm},
\end{align*}
where the family $(\mathbf{S}_{\lf nv_i\rf}^{i})_{i=1\ldots d}$ is independent and is such that for each $i$, $\mathbf{S}_{\lf nv_i\rf}^{i}$ denotes the sum of $\lf nv_i\rf$ independent and identically distributed random variables with  probability distribution $p_i$. 

Let us consider the event $A_n=\big\{ \sum_{i=1}^d\ES^i_{\lfloor nv_i\rfloor }=\lf n\bv\rf -\x_0\big\}$. We define the covariance matrix $\mathbf{\Sigma}=\sum_{i=1}^dv_i\mathbf{\Sigma}^{i}$, which since  $\bv>\z$ is positive-definite under $(A_6)$. 
Theorem 1.1 in \cite{Bent05} for nonidentically distributed independent variables ensures that $\sum_{i=1}^d ( \ES^i_{\lf nv_i\rf }-\lf nv_i\rf \m^i)n^{-\frac{1}{2}}$ converges in distribution as $n\to+\infty$ to the multivariate normal distribution $\mathcal{N}_d\left(\z,\mathbf{\Sigma} \right)$ with density $\phi$. Under $(A_5)$ we have
\[\limsup_n \frac{n}{\min_{j=1\ldots d}\sum_{i=1}^d \frac{n_i}{d}\sum_{\ka\in\NN}\min\left( p_i\left( \ka\right) ,  p_i\left( \ka+\mathbf{e}_j\right)  \right) }<+\infty,\]
which by Theorem 2.1 in \cite{DavMcDon} ensures the following local limit theorem for nonidentically distributed independent variables: 
\begin{equation}\label{local}
\lim_{n\to\infty}\sup_{\ka\in\NN}\left|n^{\frac{d}{2}}\PP\left(\sum_{i=1}^d \ES^i_{\lf nv_i\rf }=\ka\right)-\phi\left(\frac{\ka-\sum_{i=1}^d \lf nv_i\rf \m^i}{\sqrt{n}} \right)  \right|=0.
\end{equation}
In the critical case, the left eigenvector $\bv$ satisfies for each $j$,  $v_j=\sum_{i=1}^d v_i m_{ij}$, hence  $ 0\leqslant |\lfloor n v_j\rfloor -\sum_{i=1}^d\lfloor n v_i\rfloor m_{ij}|<\max(1,\sum_{i=1}^d m_{ij})$ and \eqref{local} implies in particular that
\begin{equation}\label{local2}
\lim_{n\to+\infty} n^{\frac{d}{2}}\PP\left(A_n\right)=\phi\left( \z\right) =\frac{1 }{\left( 2\pi\right)  ^{\frac{d}{2}}\left(\det \mathbf{\Sigma}\right) ^{\frac{1}{2}}}. 
\end{equation}

Now,  denoting by  $\mathfrak{S}_d$ the symmetric group of order $d$ and by $\epsilon(\sigma)$ the signature of a permutation $\sigma\in\mathfrak{S}_d$, we obtain by Leibniz formula that
 \begin{multline}\label{Leib}  \lf nv_d\rf\PP_{\x_0 }\left( \N=\lf n\bv \rf\right)= \sum_{\sigma\in \mathfrak{S}_d}\varepsilon\left( \sigma\right)x_{0,\sigma(d)} \EE\Big[\prod_{i=1}^{d-1}\Big( \delta_{i,\sigma(i)}-\frac{S^i_{\lf nv_i\rf,\sigma(i) }}{\lf nv_i\rf}\Big)\Big]\\=\hspace{-4mm}\sum_{I\subset\{1,\ldots, d-1\}}\sum_{\sigma\in \mathfrak{S}_d}\hspace{-2mm}\varepsilon\left( \sigma\right)x_{0,\sigma(d)} \EE\Big[\prod_{i\in I}\Big(\hspace{-1mm}-\frac{S^i_{\lf nv_i\rf,\sigma(i) }}{\lf nv_i\rf}+ m_{i,\sigma(i)}\Big)\1_{A_n}\Big]\hspace{-1mm}\prod_{i\notin I}( \delta_{i,\sigma(i)}-m_{i,\sigma(i) }).
\end{multline}
Let $\varepsilon>0$. Since on the event $A_n$ each $S_{\lf nv_i\rf,j}^{i}/\lf nv_i\rf$ is bounded, there exists some constant $A>0$ such that for each $i,j=1\ldots d$,
 \begin{align*} \EE\left(\left|\frac{S^i_{\lf nv_i\rf,j }}{\lf nv_i\rf}- m_{i,j}\right|\1_{A_n}\right)&\leqslant \varepsilon\PP\left( A_n\right) +\frac{A}{\varepsilon^{d+1}}\EE\left(\left|\frac{S^i_{\lf nv_i\rf,j}}{\lf nv_i\rf}- m_{i,j}\right|^{d+1}\right)\\&\leqslant \varepsilon\PP\left( A_n\right) +\frac{AB}{\varepsilon^{d+1}\lf nv_i\rf^{\frac{d+1}{2}}}\EE\left(\left|S^i_{1,j}- m_{i,j}\right|^{d+1}\right),
\end{align*}
for some constant $B>0$. The second inequality on the $d+1$-th central moment can be found for instance in \cite{DharJog69}, Theorem 2. From \eqref{local2} it thus appears that for each non-empty subset $I\subset\{1,\ldots, d-1\}$,
\[\lim_{n\to+\infty}n^{\frac{d}{2}}\hspace{-2mm}\sum_{\sigma\in \mathfrak{S}_d}\hspace{-2mm}\varepsilon\left( \sigma\right)x_{0,\sigma(d)} \EE\Big[\prod_{i\in I}\Big(-\frac{S^i_{\lf nv_i\rf,\sigma(i) }}{\lf nv_i\rf}+ m_{i,\sigma(i)}\Big)\1_{A_n}\Big]\hspace{-1mm}\prod_{i\notin I} \left( \delta_{i,\sigma(i)}-m_{i,\sigma(i) }\right)\hspace{-1mm}=0.\] Consequently, considering the remaining term in \eqref{Leib} corresponding to $I=\emptyset$, we obtain that
 \begin{align*}&\lim_{n\to+\infty}n^{\frac{d}{2}+1}\PP_{\x_0 }\left( \N=\lf n\bv \rf\right)\\&\ \ \ \ =\lim_{n\to+\infty}n^{\frac{d}{2}} \PP\left(A_n\right)\frac{1}{v_d}\sum_{\sigma\in \mathfrak{S}_d}\varepsilon\left( \sigma\right)x_{0,\sigma(d)}\prod_{i=1}^{d-1} \left( \delta_{i,\sigma(i)}-m_{i,\sigma(i)}\right) \\&\ \ \ \ = \frac{1 }{v_d\left( 2\pi\right)  ^{\frac{d}{2}}\left(\det \mathbf{\Sigma}\right) ^{\frac{1}{2}}}\det\begin{pmatrix}\mathbf{e}_1-\m^1\\\cdots\\ \mathbf{e}_{d-1}-\m^{d-1} \\\x_0 \end{pmatrix}=\frac{\x_0\cdot \mathbf{D}}{v_d\left( 2\pi\right)  ^{\frac{d}{2}}\left(\det \mathbf{\Sigma}\right) ^{\frac{1}{2}}},
 \end{align*}
where $\mathbf{D}=(D_1,\ldots,D_d)$ is such that $D_i$ is the $(d,i)$-th cofactor of the matrix $\mathbf{I}-\M$.
The criticality of $\left(\X_k\right)_{k\geqslant 0} $ implies that $\det\left( \mathbf{I}-\M\right)=( \mathbf{e}_d-\m^d )\cdot \mathbf{D}=0$. Moreover, for each $j=1\ldots d-1$, 
$(  \mathbf{e}_j-\m^j )\cdot \mathbf{D}$ corresponds to the determinant of $ \mathbf{I}-\M$ in which the $d$-th row has been replaced by the $j$-th row, and is consequently null. We have thus proven that for each $j=1\ldots d$, $(  \mathbf{e}_j-\m^j )\cdot \mathbf{D}=0$, or equivalently that $\sum_{i=1}^d m_{ji}  D_i=D_j.$ Hence $\mathbf{D}$ is a right eigenvector of $\M$ for the Perron's root 1, which implies the existence of some nonnull constant $c$ such that $\mathbf{D}=c\bu$, leading to the desired result.
\end{proof}

\textit{Proof of Theorem \ref{thm2}}
 Let us assume that $\left(\X_k\right)_{k\geqslant 0}$ is critical and satisfies $(A_1)$, $(A_5)$ and $(A_6)$. Let $\x_0\in\NN$, $k_1\leqslant\ldots\leqslant k_j\in\mathbb{N}$,  and $\x_1,\ldots,\x_j\in\NN$ and let us show that
\begin{equation}\label{toprove}
\lim_{n\to+\infty}\PP_{\x_0}\big( \X_{k_1}=\x_1,\ldots, \X_{k_j}=\x_j\mid\N=\lf n\bv \rf\big)\\=\frac{\x_j\cdot\bu}{\x_0\cdot\bu}\PP_{\x_0}\big( \X_{k_1}=\x_1,\ldots, \X_{k_j}=\x_j\big).
\end{equation}
Let $\frac{3}{4}<\varepsilon<1$. The Markov property entails that
\begin{multline}\label{second term}
\PP_{\x_0}\left( \X_{k_1}=\x_1,\ldots, \X_{k_j}=\x_j\mid\N=\lf n\bv \rf\right)
\\=\sum_{\substack{\el\in\NN\\\el< \lfloor n^{\varepsilon}\bv \rfloor}} \PP_{\x_0}\left( \X_{k_1}=\x_1,\ldots, \X_{k_j}=\x_j,\N_{k_j}=\el\right)\frac{\PP_{\x_j}\left(\N=\lf n\bv \rf-\el+\x_j\right)}{\PP_{\x_0}\left( \N=\lf n\bv \rf\right)}\\\ \ +\sum_{\substack{\el\in\NN\\ \lfloor n^{\varepsilon}\bv \rfloor\leqslant\el\leqslant \lf n\bv \rf}} \PP_{\x_0}\left( \X_{k_1}=\x_1,\ldots, \X_{k_j}=\x_j,\N_{k_j}=\el\right)\frac{\PP_{\x_j}\left(\N=\lf n\bv \rf-\el+\x_j\right)}{\PP_{\x_0}\left( \N=\lf n\bv \rf\right)}.
\end{multline}
Note that \eqref{local} ensures that
\[\lim_{n} n^{\frac{d}{2}}\PP\left(\sum_{i=1}^d \ES^i_{\lf nv_i\rf -l_i+x_{j,i}}=\lf n\bv\rf -\el\right)=\frac{1 }{\left( 2\pi\right)  ^{\frac{d}{2}}\left(\det \mathbf{\Sigma}\right) ^{\frac{1}{2}}}, \]
uniformly in $\el< \lfloor n^{\varepsilon}\bv \rfloor$, and that the proof of Proposition \ref{prop: convergence} can be used to show that
\[
\lim_{n\to+\infty}n^{\frac{d}{2}+1}\PP_{\x_j }\left( \N=\lf n\bv \rf-\el+\x_j\right)=\frac{C\x_j\cdot \bu }{v_d\left( 2\pi\right)  ^{\frac{d}{2}}\left(\det \mathbf{\Sigma}\right) ^{\frac{1}{2}}},
\]uniformly in $\el< \lfloor n^{\varepsilon}\bv \rfloor$. Together with Proposition \ref{prop: convergence}, this shows that the first sum in \eqref{second term} converges to 
\begin{equation}\label{hop}
\frac{\x_j\cdot\bu}{\x_0\cdot\bu}\sum_{\el\in\NN} \PP_{\x_0}\big( \X_{k_1}=\x_1,\ldots, \X_{k_j}=\x_j,\N_{k_j}=\el\big)\\=\frac{\x_j\cdot\bu}{\x_0\cdot\bu}\PP_{\x_0}\big( \X_{k_1}=\x_1,\ldots, \X_{k_j}=\x_j\big)
\end{equation}
as $n\to+\infty$. The second sum in \eqref{second term} can be bounded by
\begin{align*}
\frac{\PP_{\x_j}\left(\N_{k_j}\geqslant \lf n^{\varepsilon}\bv\rf\right) }{\PP_{\x_0}\left( \N=\lf n\bv \rf\right)}&\leqslant
\frac{\PP_{\x_j}\left(\|\N_{k_j}\|_1^{d+1}\geqslant n^{(d+1)\varepsilon}\|\bv\|_1^{d+1}\right)}{\PP_{\x_0}\left( \N=\lf n\bv \rf\right)}\\
&\leqslant  \frac{\EE_{\x_j}\left(\|\N_{k_j}\|_1^{d+1}\right)}{\|\bv\|_1^{d+1}n^{(d+1)\varepsilon}\PP_{\x_0}\left( \N=\lf n\bv \rf\right)}.
\end{align*}
Thanks to $(A_5)$, the moments of order $d+1$ of the finite sum $\N_{k_j}$ are finite, and since $(d+1)\varepsilon>\frac{d}{2}+1$, the right term of the last inequality converges to 0 as $n\to +\infty$ thanks to Proposition \ref{prop: convergence}. This together with \eqref{hop} in \eqref{second term} finally proves \eqref{toprove}.

\end{document}